\documentclass[11pt, a4paper, reqno]{amsart} 
\usepackage[left=2.5cm, right=2.5cm, bottom=3cm]{geometry}
\usepackage{graphicx} 
\usepackage{amsmath}
\usepackage{amssymb}
\usepackage{amsthm}
\usepackage{graphicx}
\usepackage{subcaption}
\usepackage{float}
\usepackage{bbm}
\usepackage{mathabx}
\usepackage{caption}
\usepackage{hyperref}
\usepackage{mathtools}
\usepackage[dvipsnames]{xcolor}

\newtheorem{theorem}{Theorem}
\newtheorem{lemma}{Lemma}

\newtheorem{corollary}{Corollary}

\theoremstyle{remark}

\title{A note on the norm of the embedding of $\text{PW}^2$ into $\text{PW}^q$}
\author{Iñaki Garrido, Filippo Persico, Denis Zelent}
\address{Iñaki Garrido Pérez \newline Departament de Matemàtiques i Informàtica, Universitat de Barcelona,
Barcelona, Spain} 
\email{inaki.garrido@ub.edu}
\address{Filippo Persico \newline Dipartimento di Matematica, Università
degli Studi di Milano, Milano, Italy \newline Departament de Matemàtiques i Informàtica, Universitat de Barcelona,
Barcelona, Spain} 
\email{filippo.persico@unimi.it}
\address{Denis Zelent \newline Department of Mathematical Sciences, Norwegian University of Science and Technology (NTNU), 7491 Trondheim, Norway} 
\email{denis.zelent@ntnu.no}

\thanks{Denis Zelent was supported by Grant 334466 of the Research Council of Norway.}

\begin{document}

\begin{abstract}
   We improve the currently best known bound for the norm of the embedding of $PW^2$ into $PW^q$ for all $q>2$. We show that our bound is asymptotically optimal as $q\to \infty$. Our approach combines the recently established theorem of optimal Paley--Wiener concentration with a Hardy--Littlewood--Pólya majorization argument.
\end{abstract}
\maketitle
\section{Introduction}
In this paper, we are interested in the value of $M_q$, where $M_q$ is defined for $2\leq q < \infty$ as 
$$ M_q := \sup_{\substack{f\in PW^2\\ \|f\|_2=1}}
\int_{\mathbb{R}}|f(x)|^q dx. $$
Here
$$ PW^q := \left\{f\in L^q(\mathbb{R}): \operatorname{supp}\widehat f\subset[-1/2,1/2] \right\} $$
is the Paley--Wiener space, and the Fourier transform of $f$ is defined as 
$$ \widehat f(\xi) := \int_{\mathbb{R}} f(x)e^{-2\pi i x\xi} dx.$$
Equivalently,
$$ M_q^{1/q} = \sup_{0\neq f\in PW^2}
\frac{\|f\|_q}{\|f\|_2}, $$
which represents the norm of the embedding of $PW^2$ into $PW^q$.

In general, for $p\leq q$, the norm of the embedding of $PW^p$ into $PW^q$, represented by 
$$ \sup_{0\neq f\in PW^p} \frac{\|f\|_q}{\|f\|_p},$$
is a particular case of the Bernstein-Nikolskii inequalities, which have been studied extensively in the literature (see, e.g., \cite{Ganzburg1} for a survey). Related extremal and optimization problems for Paley-Wiener spaces have also attracted considerable attention in recent years; see, for instance, \cite{Brevig, CarneiroEMB, CarneiroFOPT, instanes2024}.
Nevertheless, even in the case of the embedding $PW^2 \xhookrightarrow{} PW^q$, the sharp constant $M_q$ remains unknown for general $q$. We next summarize the current state of knowledge concerning~$M_q$.

Trivially, $M_2 = 1$. 
If we define 
$$ M_{\infty} = \sup_{\substack{f\in PW^2\\ \|f\|_2=1}}\|f\|_{\infty}, $$ then $M_{\infty}=1$ follows easily by the fact that for $f\in PW^2$ (using Cauchy--Schwarz)
$$|f(x)| = \left|\int_{-1/2}^{1/2}\hat{f}(\xi)e^{2\pi i x \xi}d\xi\right| \leq \|\hat{f}\|_2 = \|f\|_2.$$
Taking $f(x) = \frac{\sin \pi(x-y)}{\pi(x-y)}$ shows that the constant is optimal.\\
\textsl{Remark.} Note that this case is different from what we mean in this paper by the constant $M_q$ as $q\to \infty$, i.e., $\lim_{q\to \infty} M_q \neq M_{\infty}$.

From \cite[Sec. 4.9.53]{Timan1963} it follows that 
$$M_q \leq 1.$$
See also \cite[Eq. (1.14)]{Ganzburg1}, where we note that the difference comes from using different normalization in the Fourier transform convention. 

The only case other than $q=2,\infty$, when the sharp constant $M_q$ is known with high precision is $q=4$. It follows from \cite{Garsia} that
    $$M_4 = C_0,$$
where $C_0 = .686981293033114600949413\ldots$. See also \cite[Eq. (1.16)]{Ganzburg1}.

The best bound for a general $q>2$, to the best of our knowledge, follows from the Babenko--Beckner inequality \cite{Beckner}, which says that for $f\in L^p(\mathbb{R})$, $p\in [1,2]$, $1/p+1/p'=1$, we have
$$\left(\int_{\mathbb{R}}|\hat{f}(\xi)|^{p'}d\xi\right)^{1/p'} \leq \sqrt{\frac{p^{1/p}}{p'^{1/p'}}} \left(\int_{\mathbb{R}}|f(x)|^pdx\right)^{1/p}.$$
Applying it to $f=\check{g}$  with $p'=q$ and using Hölder's inequality on the right-hand side with the fact that $f\in PW^2$ gives 
$$\left(\int_{\mathbb{R}}|g(x)|^q dx\right)^{1/q} \leq \sqrt{\frac{q'^{1/q'}}{q^{1/q}}} \|g\|_2,$$
and therefore 
$$ M_q^{1/q} \leq \sqrt{\frac{q'^{1/q'}}{q^{1/q}}},$$
or equivalently 
\begin{align}\label{BabenkoBeckner}
    M_q \leq B_q := \sqrt{\frac{q^{q-2}}{(q-1)^{q-1}}}.
\end{align}
Note that as $q\to\infty$, the asymptotic behaviour of the Babenko--Beckner upper bound is
$$B_q \sim \sqrt{\frac{e}{q}}.$$

It can be easily seen that the above bound is not asymptotically optimal as $q\to \infty$. Indeed, it is clear that we have
$$M_q \geq \int_{\mathbb{R}} \left|\frac{\sin(\pi x)}{\pi x}\right|^q dx.$$
Using that $\frac{\sin(\pi x)}{\pi x} \geq 1-\frac{\pi^2 x^2}{6}$ on $(-\frac{\sqrt{6}}{\pi}, \frac{\sqrt{6}}{\pi})$ this gives
\begin{align}\label{lowbound}
    M_q \geq \sqrt{\frac{6}{\pi}} \frac{\Gamma(q+1)}{\Gamma(q+3/2)} \sim \sqrt{\frac{6}{\pi q}}.
\end{align}

In this paper, we want to improve the general bound coming from the Babenko--Beckner inequality, focusing mainly on the case $q\to \infty$. Our main theorem is as follows.
\begin{samepage}
\begin{theorem}\label{mainth}
For any $q>2$
    \begin{align*}
    M_q = \sup_{\substack{f\in PW^2\\ \|f\|_2=1}}\|f\|_q^q \leq \Lambda_q := \int_0^{\infty} (\lambda'(x))^{q/2}dx,
    \end{align*}
where $\lambda(s)$ denotes the best $L^2$ concentration in $PW^2$ on $[-s/2,s/2]$, i.e.,
$$\lambda(s):=\sup_{\substack{f\in PW^2\\ \lVert f\rVert_2=1}}
 \int_{-s/2}^{s/2}|f(x)|^2 dx.$$
\end{theorem}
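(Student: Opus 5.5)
We plan to compare the decreasing rearrangement of $|f|^2$ with the function $\lambda'$, and then apply a Hardy--Littlewood--Pólya type majorization inequality to the convex function $t\mapsto t^{q/2}$.

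Fix $f\in PW^2$ with $\|f\|_2=1$. Let $g=(|f|^2)^*$ be the decreasing rearrangement of $|f|^2$ on $[0,\infty)$. Then $\int_0^\infty g=1$ and $\int_{\mathbb{R}}|f|^q=\int_0^\infty g^{q/2}$. The key inequality is
\[
\int_0^s g(x)\,dx \le \lambda(s)\qquad\text{for all } s\ge 0 .
\]
The left side is the supremum of $\int_E|f|^2$ over sets $E$ of measure $s$. We need the optimal Paley--Wiener concentration theorem, which we assume is available from the literature cited by the paper: among all measurable sets $E$ with $|E|=s$, the best $L^2$ concentration in $PW^2$ is attained by an interval. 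This gives $\int_E|f|^2\le\lambda(s)$. By definition of $\lambda$, translation invariance makes the interval $[-s/2,s/2]$ the correct normalization.

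Next we record the needed properties of $\lambda$. It is nondecreasing and $\lambda(0)=0$. It satisfies $\lambda(s)\to1$ as $s\to\infty$. It is concave, and in fact $\lambda'$ is decreasing. Concavity should follow from the known behaviour of the top prolate spheroidal eigenvalue as a function of the bandwidth-time product. If it is not explicitly available, we would derive it from the rearrangement bound itself. Write $\lambda(s)=\sup_f\int_0^s(|f|^2)^*$. Each function $s\mapsto\int_0^s(|f|^2)^*$ is concave, but a supremum of concave functions need not be concave. We would therefore rely on the stated literature, namely the monotonicity of $\lambda'$ via the prolate eigenvalue derivative formula. Assume these properties. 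Then $g$ and $\lambda'$ are both decreasing, nonnegative, and have total integral $1$, and $g$ is majorized by $\lambda'$ in the sense
\[
\int_0^s g\le\int_0^s\lambda' \quad\text{for all } s, \qquad \int_0^\infty g=\int_0^\infty\lambda'=1 .
\]

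Finally we apply the continuous Hardy--Littlewood--Pólya (Karamata) inequality with $\Phi(t)=t^{q/2}$, which is convex for $q>2$. This gives $\int_0^\infty\Phi(g)\le\int_0^\infty\Phi(\lambda')$. To prove it directly, use convexity pointwise:
\[
\Phi(g)-\Phi(\lambda')\le\Phi'(g)\,(g-\lambda').
\]
Then integrate by parts with $G(s)=\int_0^s(g-\lambda')\le0$ and $G(0)=G(\infty)=0$:
\[
\int\Phi'(g)(g-\lambda')=-\int G\,d\big(\Phi'(g)\big)\le 0,
\]
since $\Phi'(g)$ is decreasing, so $d(\Phi'(g))\le0$, and $G\le0$. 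Taking the supremum over $f$ yields $M_q\le\Lambda_q$.

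The technical points in this last step are the boundary terms at infinity and the fact that $g$ may be unbounded near $0$. We would handle both by truncation, noting that $g\le\|f\|_\infty^2\le1$ anyway. We expect the main obstacle to be justifying that $\lambda'$ is decreasing and that the concentration theorem applies to arbitrary sets, since the majorization step is routine once these are in place.
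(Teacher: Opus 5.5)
Your argument is correct and is essentially the paper's: the optimal Paley--Wiener concentration theorem gives $\int_0^s (|f|^2)^*\le\lambda(s)=\int_0^s\lambda'$, and a Hardy--Littlewood--P\'olya majorization with $\Phi(t)=t^{q/2}$ finishes the proof.

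The one thing to fix is the step you flag as the main obstacle, namely that $\lambda$ is concave ($\lambda'$ non-increasing). You give no proof of it, and it is not needed. The paper deliberately avoids it by passing to the decreasing rearrangement of $\lambda'$. It uses $\int_0^s\lambda'\le\int_0^s(\lambda')^*$ and $\int_0^\infty((\lambda')^*)^{q/2}=\int_0^\infty(\lambda')^{q/2}$, so that its HLP lemma, which is stated for two non-increasing functions, applies.

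Your own direct proof of the majorization step never uses monotonicity of $\lambda'$ either. The tangent-line inequality $\Phi(g)-\Phi(\lambda')\le\Phi'(g)(g-\lambda')$ holds pointwise for arbitrary nonnegative values. The integration by parts on $[0,T]$ only needs three facts:
\begin{itemize}
\item $\Phi'(g)$ is non-increasing and bounded, which follows from $g$ being non-increasing with $g\le 1$;
\item $G\le 0$;
\item $G(0)=0$.
\end{itemize}
The boundary term $\Phi'(g(T))G(T)$ is $\le 0$ even without using $G(\infty)=0$. So your argument proves a version of HLP in which only the majorized function must be monotone, and that makes both the concavity assumption and the paper's rearrangement device unnecessary. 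Delete the concavity paragraph and the proof stands.
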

\end{samepage}

Figure \ref{fig:upper-bound-comparison} shows how the upper bound $\Lambda_q$ improves the Babenko--Beckner upper bound $B_q$ from equation \eqref{BabenkoBeckner} for all $q > 2$.
\begin{figure}[H]
    \centering
    \includegraphics[width=0.8\textwidth]{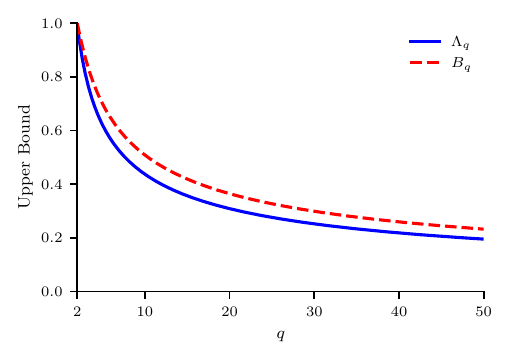}
    \caption{Comparison of the Babenko--Beckner upper bound $B_q$ and the improved upper bound $\Lambda_q$.}
    \label{fig:upper-bound-comparison}
\end{figure}

The following lemmas allow us to study the asymptotic behavior of our bound as $q\to \infty$.
\begin{lemma}\label{smalls}
As $s\to 0^{+}$, we have
\begin{align*}
    \lambda(s) &= s - \frac{\pi^2}{36}s^3 + O(s^5),\\
    \lambda'(s) &= 1 - \frac{\pi^2}{12}s^2 + O(s^4).
\end{align*}
\end{lemma}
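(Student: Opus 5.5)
Write $\lambda(s)$ as the top eigenvalue of a compact integral operator and expand it for small $s$. For $f\in PW^2$ with $\|f\|_2=1$ we write $f=\check g$ with $g\in L^2[-1/2,1/2]$ and $\|g\|_2=1$. Then
\[
\int_{-s/2}^{s/2}|f(x)|^2dx=\langle K_s g,g\rangle,\qquad (K_sg)(\xi)=\int_{-1/2}^{1/2}\frac{\sin(\pi s(\xi-\eta))}{\pi(\xi-\eta)}g(\eta)\,d\eta ,
\]
so $\lambda(s)$ is the largest eigenvalue of the positive, self-adjoint, trace-class operator $K_s$ on $L^2[-1/2,1/2]$. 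This is the classical prolate spheroidal (Slepian) setting.

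\textbf{Expansion of the operator.} Expanding the kernel in powers of $s$ gives
\[
\frac{\sin(\pi s u)}{\pi u}=s-\frac{\pi^2 s^3}{6}u^2+O(s^5u^4),\qquad u=\xi-\eta .
\]
The error is uniform on the bounded square, so $K_s=sA-\tfrac{\pi^2s^3}{6}B+O(s^5)$ in operator norm. Here $A g=\langle g,1\rangle 1$ is the rank-one projection onto the constants (norm $1$ on an interval of length $1$), and $B$ has kernel $(\xi-\eta)^2$. Hence $\lambda(s)/s$ is the top eigenvalue of $A-\tfrac{\pi^2 s^2}{6}B+O(s^4)$, an analytic perturbation in the variable $t=s^2$ of the operator $A$. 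The operator $A$ has a simple top eigenvalue $1$ with eigenvector $e\equiv1$, separated by a gap $1$ from the rest of its spectrum, which is $\{0\}$. Rellich--Kato analytic perturbation theory, or elementary Rayleigh quotient bounds, therefore gives
\[
\frac{\lambda(s)}{s}=1-\frac{\pi^2s^2}{6}\langle Be,e\rangle+O(s^4).
\]
We compute $\langle Be,e\rangle=\iint(\xi-\eta)^2\,d\xi\,d\eta=2\int\xi^2\,d\xi=2\cdot\tfrac1{12}=\tfrac16$. This yields $\lambda(s)=s-\tfrac{\pi^2}{36}s^3+O(s^5)$. The $O(s^5)$ claim (rather than $O(s^4)$) holds because the kernel is even in $s$ after dividing by $s$, so $\lambda(s)/s$ is a function of $s^2$.

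\textbf{The derivative.} For $\lambda'(s)$, termwise differentiation of an asymptotic expansion is not automatic, so we need analyticity. The family $s\mapsto K_s$ is real-analytic, in fact entire, in operator norm. For small $s>0$ the top eigenvalue is simple and isolated, since all other eigenvalues are $O(s^3)$ while $\lambda(s)\sim s$. By Kato's theorem $\lambda(s)$ is therefore real-analytic near $0$. More precisely, $\mu(t)=\lambda(s)/s$ with $t=s^2$ is analytic near $t=0$, which follows from the analytic perturbation of $A$ in $t$. So $\lambda(s)=s\,\mu(s^2)$ is a convergent power series, and differentiating termwise gives $\lambda'(s)=1-\tfrac{\pi^2}{12}s^2+O(s^4)$.

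\textbf{Main obstacle.} The delicate step is justifying the perturbation expansion rigorously. Concretely, one has to show that the top eigenvalue of $A+tC(t)$, with $C$ analytic, is analytic with first-order coefficient $\langle C(0)e,e\rangle$. The cleanest route is the Riesz projection $P(t)=\frac{1}{2\pi i}\oint_{|z-1|=1/2}(z-A-tC(t))^{-1}dz$. This projection has rank one for small $t$, and $\mu(t)=\operatorname{tr}\bigl((A+tC(t))P(t)\bigr)$ is analytic. If one prefers to avoid this machinery, an alternative is to get two-sided bounds with error $O(s^5)$. The lower bound comes from the trial function $g=e$; the upper bound comes from writing $g=ae+h$ with $h\perp e$ and bounding the cross terms. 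Both bounds have the form $s-\tfrac{\pi^2}{36}s^3+O(s^5)$. Then for the derivative, use analyticity as above.
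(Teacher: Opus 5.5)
Your proposal is correct, and it takes a genuinely different route from the paper.

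\textbf{The paper's route.} The paper stays inside the prolate spheroidal special-function framework. It uses the identity $\lambda(s)=s\,\bigl(R^{(1)}_{00}(\pi s/2,1)\bigr)^2$ and expands $R^{(1)}_{00}$ in spherical Bessel functions $j_{2k}$ with coefficients $d_{2k}$. It computes $d_0$ and $d_2$ to order $s^2$ and uses $d_{2k}=O(s^{2k})$ to get the expansion of $\lambda$. For $\lambda'$ it then imports real-analyticity of $\lambda$ near $0$ from a separate argument, namely the footnote's appeal to joint analyticity of the angular function $S_{00}(c,x)$, and differentiates termwise.

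\textbf{Your route.} You work directly with the sinc-kernel operator $K_s$ on $L^2[-1/2,1/2]$. You observe that $K_s/s$ is an entire operator-valued function of $t=s^2$ which equals the rank-one projection onto constants at $t=0$, and you apply first-order perturbation theory. The only computation is $\iint(\xi-\eta)^2\,d\xi\,d\eta=1/6$.

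\textbf{What each buys.} Your approach is self-contained. The same Kato--Rellich argument that gives the coefficient $-\pi^2/36$ also shows that $\lambda(s)=s\,\mu(s^2)$ is real-analytic through $0$. So the expansion of $\lambda'$ follows from $\lambda'(s)=\mu(s^2)+2s^2\mu'(s^2)$ with no external citation, and you see structurally why only odd powers of $s$ appear. The paper's route instead plugs into standard tables and recurrences for prolate functions. That makes higher coefficients routine to read off, though Rayleigh--Schr\"odinger perturbation theory would also produce them in your setting.

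\textbf{Two small remarks.}
\begin{itemize}
\item The evenness argument for $O(s^5)$ is not needed. Your operator-norm remainder for $K_s/s$ is already $O(s^4)$, and the second-order perturbation term is $O(t^2)=O(s^4)$.
\item Kato's theorem cannot be applied to $K_s$ itself at $s=0$. There $K_0=0$, and the eigenvalue $0$ is infinitely degenerate and not isolated. Applying it at each small $s>0$ only gives analyticity on an interval $(0,\delta)$, which is not enough to differentiate the expansion at $0$. So your rescaling to $K_s/s$, introduced with ``more precisely'', is essential to the argument, not optional.
\end{itemize}
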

\begin{lemma}\label{lambda(s)/s}
For all $s> 0$ we have that $\lambda(s)/s$ is non-increasing and 
    \begin{align*}
   \lambda'(s) \leq \frac{\lambda(s)}{s}.
\end{align*}
\end{lemma}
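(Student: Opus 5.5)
The statement to prove is Lemma~\ref{lambda(s)/s}. I would prove it by showing that $\lambda$ is concave on $[0,\infty)$ with $\lambda(0)=0$. Both claims then follow at once.

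Concavity comes from the structure of $\lambda$ as a supremum. For a fixed $f\in PW^2$ with $\|f\|_2=1$, the map $s\mapsto \int_{-s/2}^{s/2}|f|^2$ is nondecreasing, but a single such map is not concave in general. So I would instead use the known description of $\lambda(s)$ as the largest eigenvalue of the time--frequency limiting operator, that is, the first prolate spheroidal eigenvalue $\lambda_0(c)$ with $c=\pi s/2$. For the derivative I would use the Hellmann--Feynman type formula
\[
\lambda'(s)=\tfrac12\bigl(|\psi_s(s/2)|^2+|\psi_s(-s/2)|^2\bigr)=|\psi_s(s/2)|^2,
\]
where $\psi_s$ is the normalized extremal prolate function, which is even.

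An alternative route avoids concavity and derivative formulas. Let $g=\psi_s$ be the extremizer at scale $s$ and take $t\in(0,1)$. Dilating does not preserve $PW^2$, but it does in one direction: $h(x)=\sqrt t\, g(tx)$ has spectrum in $[-t/2,t/2]\subset[-1/2,1/2]$ and satisfies $\|h\|_2=1$. Its concentration is
\[
\int_{-s/(2t)}^{s/(2t)}|h|^2=\int_{-s/2}^{s/2}|g|^2=\lambda(s).
\]
Hence $\lambda(s/t)\ge \lambda(s)$, but this only says $\lambda$ is monotone, not that $\lambda(s)/s$ is non-increasing. A better idea is needed.

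I would use $\lambda(s)=\int_{-s/2}^{s/2}|\psi_s|^2$ together with the envelope formula, which gives $\lambda'(s)=|\psi_s(s/2)|^2$. Then
\[
\frac{\lambda(s)}{s}=\frac1s\int_{-s/2}^{s/2}|\psi_s|^2,
\]
which is the average of $|\psi_s|^2$ over the interval. The inequality $\lambda'(s)\le\lambda(s)/s$ is therefore equivalent to saying that $|\psi_s|^2$ at the endpoint is at most its average over $[-s/2,s/2]$. This holds if $|\psi_s|$ is even and decreasing on $[0,s/2]$. That is a standard property of the zeroth prolate function: it is positive on the interval with a maximum at $0$, and it satisfies a second-order ODE $((1-x^2)\psi')'+(\chi-c^2x^2)\psi=0$ after rescaling, from which monotonicity on $[0,1]$ follows. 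Once the inequality holds, we get
\[
\Bigl(\frac{\lambda(s)}{s}\Bigr)'=\frac{s\lambda'(s)-\lambda(s)}{s^2}\le0,
\]
which is the monotonicity claim.

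The main obstacle is rigor: establishing differentiability of $\lambda$ and the envelope formula, which needs simplicity of the top eigenvalue, and establishing monotonicity of $\psi_s$ on $[0,s/2]$. I would cite the classical Slepian--Pollak theory for both. For the monotonicity I would argue via the ODE: $\psi'(0)=0$, and at a first interior critical point the ODE sign analysis forces a contradiction while $\chi>c^2x^2$ holds. Alternatively, if this is delicate, I would fall back on the concavity of $\lambda$ known from the prolate literature.
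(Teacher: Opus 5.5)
Your final argument is a valid route, different from the paper's: the envelope formula $\lambda'(s)=|\psi_s(s/2)|^2$, then ``endpoint value $\le$ average'', then integrating $(\lambda(s)/s)'\le 0$. However, the ODE argument you sketch does not close the step you yourself flag as the obstacle.

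\textbf{The gap in the monotonicity step.} For every $c>0$ one has $\chi_0(c)\le c^2/3<c^2$; test the constant function in the variational characterization of $\chi_0$. So $\chi_0>c^2x^2$ holds only on $[0,x_0)$, with $x_0=\sqrt{\chi_0}/c<1/\sqrt{3}$. On $(x_0,1)$ the equation shows that a critical point of $\psi_0$ would be a local minimum, and local sign analysis cannot rule that out. This is exactly the part of the interval next to the endpoint that your inequality is about.

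\textbf{How to repair it.} The missing ingredient is the behaviour at the regular singular point $x=1$. Since $\psi_0$ is smooth there, $(1-x^2)\psi_0'(x)\to 0$ as $x\to1^-$. On $(x_0,1)$ you have $\bigl((1-x^2)\psi_0'\bigr)'=(c^2x^2-\chi_0)\psi_0>0$, using $\psi_0>0$ on $(-1,1)$ (Sturm oscillation). Hence $(1-x^2)\psi_0'<0$ on $(x_0,1)$. Combined with $\psi_0'(0)=0$ and $\bigl((1-x^2)\psi_0'\bigr)'<0$ on $(0,x_0)$, this gives $\psi_0'<0$ on all of $(0,1)$. With this repair your proof is complete.

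\textbf{Two smaller points.} The envelope step does not need simplicity of the top eigenvalue. The function $t\mapsto\lambda(t)-\int_{-t/2}^{t/2}|\psi_s|^2$ is nonnegative and vanishes at $t=s$, so differentiability of $\lambda$ (from real-analyticity, as in the paper) suffices. Separately, drop the fallback to concavity of $\lambda$. It is strictly stronger than the lemma, you give no source for it, and the paper deliberately avoids needing $\lambda'$ to be monotone; that is why it passes to $(\lambda')^*$ in the proof of Theorem~\ref{mainth}.

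\textbf{Comparison with the paper.} The paper runs in the opposite direction and proves monotonicity of $\lambda(s)/s$ first, with no differentiation. For $a<b$ it takes the maximizer $\psi_b$ and chooses $E\subset(-b/2,b/2)$ of measure $a$ on which $|\psi_b|^2$ is largest, so that $\int_E|\psi_b|^2\ge\frac ab\lambda(b)$. It then concludes from $\lambda(a)\ge\int_E|\psi_b|^2$, and obtains the derivative inequality afterwards by differentiating $\lambda(s)/s$. Because $E$ need not be an interval, that last step is really an application of Theorem~\ref{OptimalPWConcentration}. Alternatively, it follows from exactly the fact you prove, that $|\psi_b|$ is even and decreasing on $[0,b/2]$, which lets one take $E=[-a/2,a/2]$.

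So the paper's argument uses nothing about the shape of the extremizer and gets the first claim without any differentiability, but it leans on the recent optimal concentration theorem. Yours relies only on classical prolate theory (Slepian's commuting ODE and Sturm oscillation). The cost is that you need differentiability of $\lambda$ for both claims and a genuine analysis of $\psi_s$ near the endpoints.
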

Using the above results, we obtain the following upper bound as $q\to\infty$.
\begin{corollary}\label{upperbound}
We have, as $q\to\infty$,
\begin{align*}
    \Lambda_q \leq \sqrt{\frac{6}{\pi q}} + O(q^{-3/2}).
\end{align*}
\end{corollary}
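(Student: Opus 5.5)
We need to show $\Lambda_q=\int_0^\infty \lambda'(x)^{q/2}\,dx \le \sqrt{6/(\pi q)}+O(q^{-3/2})$. The plan is to split the integral at a small fixed threshold $\delta>0$.

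\textbf{Near zero.} On $[0,\delta]$ we use Lemma~\ref{smalls}, which gives $\lambda'(x)\le 1-\frac{\pi^2}{12}x^2+Cx^4$. Since $\log(1-u)\le -u$, this yields
$$\lambda'(x)^{q/2}\le \exp\!\Big(-\frac{q\pi^2}{24}x^2+\frac{Cq}{2}x^4\Big).$$
Substitute $x=t/\sqrt q$. The main Gaussian term is then
$$\int_0^\infty e^{-q\pi^2x^2/24}\,dx=\frac12\sqrt{\frac{24\pi}{\pi^2 q}}=\sqrt{\frac{6}{\pi q}},$$
which is exactly the claimed leading term. The quartic perturbation needs more care, because $e^{Cqx^4/2}$ is not close to $1$ uniformly in $x$.

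\textbf{Handling the quartic term.} Choose $\delta$ so small that $Cx^4\le \frac{\pi^2}{24}x^2$ on $[0,\delta]$. This guarantees $\lambda'(x)^{q/2}\le e^{-q\pi^2x^2/48}$ there, so the integrand decays like a Gaussian throughout. Now set $a=q\pi^2x^2/24$. Use $e^{-a+b}-e^{-a}\le b\,e^{-a+b}$ with $b=Cqx^4/2\le a/2$. The error term is then at most
$$\int_0^\infty \frac{Cq}{2}x^4 e^{-q\pi^2x^2/48}\,dx = O\!\big(q\cdot q^{-5/2}\big)=O(q^{-3/2}).$$
Hence $\int_0^\delta \lambda'^{q/2}\le \sqrt{6/(\pi q)}+O(q^{-3/2})$.

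\textbf{Away from zero.} On $[\delta,\infty)$ we use Lemma~\ref{lambda(s)/s}, which gives $0\le\lambda'(x)\le \lambda(x)/x$. Since $\lambda(x)/x$ is non-increasing, $\lambda'(x)\le \lambda(\delta)/\delta=:\rho$ for $x\ge\delta$. By Lemma~\ref{smalls}, $\rho<1$ once $\delta$ is small. Also $\lambda\le 1$, so $\lambda'(x)\le 1/x$.

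Write $\lambda'^{q/2}\le \lambda'^{q/2-1}\cdot\lambda'$ and bound the first factor by $\rho^{q/2-1}$. This gives
$$\int_\delta^\infty \lambda'^{q/2}\le \rho^{q/2-1}\int_\delta^\infty\lambda' \le \rho^{q/2-1}\,\big(1-\lambda(\delta)\big)\le \rho^{q/2-1}.$$
Here we use that $\lambda$ is increasing and bounded by $1$. (Alternatively, bound by $\int_\delta^\infty \min(\rho,1/x)^{q/2}$, which is finite for $q>2$.) This tail is exponentially small in $q$, hence $O(q^{-3/2})$.

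\textbf{Main obstacle.} The key difficulty is that the $O(s^4)$ remainder in Lemma~\ref{smalls} must be used uniformly on a fixed interval, not merely asymptotically. Choosing $\delta$ to absorb the quartic term into half the Gaussian exponent resolves this. The bound $\lambda'\ge0$ and the integrability of $\lambda'$ needed in the tail step follow from the monotonicity of $\lambda$ and $\lambda\le 1$.
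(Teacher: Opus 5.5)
Your proposal is correct and follows essentially the same route as the paper. You split at a small $\delta$ with $C\delta^{2}\le \pi^{2}/24$, bound $(\lambda')^{q/2}$ by a Gaussian whose quartic perturbation is controlled via $e^{v}\le 1+ve^{v}$ to get $\sqrt{6/(\pi q)}+O(q^{-3/2})$, and bound the tail by $(\lambda(\delta)/\delta)^{q/2-1}\int_\delta^\infty\lambda'$ using Lemma~\ref{lambda(s)/s}; the only cosmetic difference is that you use $\lambda\le 1$ and $\rho<1$ where the paper uses $\int_0^\infty\lambda'=1$ and the explicit bound $\lambda(\delta)/\delta\le 1-\frac{\pi^{2}}{72}\delta^{2}$.
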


Corollary \ref{upperbound}, together with eq. (\ref{lowbound}), shows that Theorem \ref{mainth} is asymptotically sharp as $q\to \infty$, and so, as $q\to\infty$,
$$M_q \sim \sqrt{\frac{6}{\pi q}}.$$

This short note is organized as follows. Section $2$ presents the proof of Theorem \ref{mainth}. Section $3$ is devoted to the results about the function $\lambda$ and includes proofs of Lemmas \ref{smalls} and \ref{lambda(s)/s}, and Corollary \ref{upperbound}.

\section{Proof of Theorem \ref{mainth}}
Our proof idea for Theorem \ref{mainth} relies mainly on using the optimal concentration in the Paley--Wiener space. For sets of size $0 < |S| < 0.8$, this is the classic Donoho--Stark result \cite{DonohoStark}. It was conjectured in 1989 (see Conjecture 1 in \cite{DonohoStarkConjecture}) that the result is true for all $0 < |S| < \infty$, which was only proven very recently in \cite{abreu2026optimalconcentrationpaleywienerspace}:

\begin{theorem}[Optimal Paley–Wiener concentration]
\label{OptimalPWConcentration}
Let $S \subset \mathbb{R}$ be a measurable set with $0 < |S| < \infty$ and $g$ be any function of $PW^2$ with $\|g\|_{2}=1$. Then
\begin{align*}
     \int_{0}^{|S|}u^{*}(x) dx \leq \sup_{\substack{f\in PW^2\\ \|f\|_2=1}} \int_{-|S|/2}^{|S|/2}|f(x)|^2 dx = \lambda(|S|).
\end{align*}
Here $u^{*}$ denotes the decreasing rearrangement of $|g|^2$.
\end{theorem}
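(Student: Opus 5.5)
The equality on the right is the definition of $\lambda$, so the content is the inequality. I would first reformulate it as a statement about sets and then attack that set statement with a symmetrization argument. This is the hard part and the step I expect to be the main obstacle; it is essentially the Donoho--Stark conjecture.

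\emph{Reformulation.} By the bathtub principle, for $g\in PW^2$ with $\|g\|_2=1$,
\[
\int_0^{|S|}u^*(x)\,dx=\sup_{|T|=|S|}\int_T|g|^2 .
\]
Hence it suffices to prove that for every measurable $T$ with $|T|=s$,
\[
\lambda_1(T):=\sup_{\substack{f\in PW^2\\ \|f\|_2=1}}\int_T|f|^2\le\lambda(s).
\]
Here $\lambda_1(T)$ is the top eigenvalue of the compact, positive, trace-class operator $P_BP_TP_B$, where $P_B$ is band-limiting and $P_T$ is time-limiting. Two standard facts follow at this stage. First, the supremum defining $\lambda_1(T)$ is attained by an eigenfunction $f_T$. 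Second, by a density argument, one may assume $T$ is a finite union of intervals.

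\emph{Symmetrization.} I would fix an optimizer $f=f_T$ and set $u=|f|^2$, $\mu(t)=|\{u>t\}|$ and $I(\sigma)=\int_0^\sigma u^*$. I would then derive a differential inequality for $I$ in the spirit of the Nicola--Tilli proof of the Faber--Krahn inequality for the STFT. The key input would be a lower bound for the "perimeter" of the superlevel sets of $|f|^2$ inside $PW^2$, comparable to the same quantity for the prolate eigenfunction $\psi_\sigma$ of the interval $[-\sigma/2,\sigma/2]$. To supply it, I would exploit the Slepian--Pollak commuting second-order differential operator $\frac{d}{dx}\bigl((\sigma^2/4-x^2)\frac{d}{dx}\bigr)-\pi^2x^2$. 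This operator controls $\lambda(\sigma)$ and its derivative, giving the variational identity
\[
\lambda'(\sigma)=\tfrac12\bigl(|\psi_\sigma(\sigma/2)|^2+|\psi_\sigma(-\sigma/2)|^2\bigr).
\]
The aim is to show that $I$ is a subsolution of the ODE that $\lambda$ satisfies. Concretely, I would aim for
\[
I'(\sigma)=u^*(\sigma)\le F\bigl(\sigma,I(\sigma)\bigr),
\]
where $\lambda'(\sigma)=F(\sigma,\lambda(\sigma))$. Since $I(0)=\lambda(0)=0$, an ODE comparison argument then yields $I(s)\le\lambda(s)$, which is what we want.

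\emph{Where it will be hardest.} The obstacle is producing the function $F$ and the pointwise bound $u^*(\sigma)\le F(\sigma,I(\sigma))$. Paley--Wiener functions have no conformal or log-subharmonic structure on $\mathbb{R}$ comparable to the Fock-space setting, so no obvious isoperimetric inequality is available. My first attempt would be to pass to the Fourier side, where the problem becomes concentration of $\widehat f\in L^2[-1/2,1/2]$ under a Fourier multiplier. I would then try to obtain the needed bound from the second-order ODE satisfied by prolate functions, combined with a rearrangement comparison of $T$ with the interval $[-s/2,s/2]$ and a Riesz-type (Baernstein star-function) inequality for the kernel $\frac{\sin\pi(x-y)}{\pi(x-y)}$. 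Failing that, the fallback is a continuity or perturbation argument. The Donoho--Stark range $|S|<0.8$ would serve as the base case, and monotonicity of $\lambda$ in $s$ together with a first-variation analysis would show that no non-interval set can become optimal as $s$ grows.
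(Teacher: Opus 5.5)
\textbf{There is a genuine gap: the proposal proves only the easy reduction, and the step carrying the whole theorem is left open.}

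The paper does not prove this statement. It imports it from \cite{abreu2026optimalconcentrationpaleywienerspace}, where the set inequality $\sup_{\|f\|_2=1}\int_S|f|^2\le\lambda(|S|)$ (the 1989 Donoho--Stark conjecture) is proved; the $u^*$ form is just its bathtub reformulation. Your reduction to $\lambda_1(T)\le\lambda(|T|)$ is correct, including the following pieces:
\begin{itemize}
\item attainment of $\lambda_1(T)$;
\item the passage to finite unions of intervals, which works because $|\lambda_1(T)-\lambda_1(T')|\le|T\triangle T'|$, since $\|f\|_\infty\le\|f\|_2$ on $PW^2$.
\end{itemize}
But this only restates the conjecture. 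Everything after it is a programme. You never specify $F$, and you give no argument for $u^*(\sigma)\le F(\sigma,I(\sigma))$.

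This is not a routine omission. In Nicola--Tilli the differential inequality comes from $\Delta\log u\equiv-4\pi$ on $\{u>t\}$. Via the divergence theorem this gives $\int_{\{u=t\}}|\nabla u|=4\pi t\,\mu(t)$, which is then combined with the isoperimetric inequality and Cauchy--Schwarz in the coarea formula. On $\mathbb{R}$ the ``perimeter'' of $\{u>t\}$ is just a count of endpoints, and $(\log|f|^2)''$ is neither constant nor controlled for $f\in PW^2$. So nothing converts $\sum_{u(x)=t}|u'(x)|$ into a function of $t$ and $\mu(t)$, and Bernstein's inequality gives only global bounds.

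Moreover, the naive choice $F(\sigma,y)=\lambda'(\sigma)$ is false. Take $g=\psi_s$, the extremal for $[-s/2,s/2]$, and set $h=\lambda-I$.
\begin{itemize}
\item If $u^*\le\lambda'$ on $(0,s)$, then $h$ is nondecreasing with $h(0)=0\ge h(s)$, the last inequality because trivially $I(s)\ge\lambda(s)$.
\item Hence $I\equiv\lambda$ on $[0,s]$, so $\|\psi_s\|_\infty^2\ge\lambda(\sigma)/\sigma\to1$ as $\sigma\to0^+$.
\item Equality in Cauchy--Schwarz then forces $\psi_s$ to be a unimodular multiple of a translate of $\frac{\sin\pi x}{\pi x}$. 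Such a function is not an eigenfunction of $P_BP_{[-s/2,s/2]}P_B$ for $s>0$, a contradiction.
\end{itemize}
So you need a genuinely $y$-dependent $F$, i.e.\ a new level-set inequality on $PW^2$, and none is supplied.

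The fallback does not close the gap either. Starting from the Donoho--Stark range $|S|<0.8$, a first-variation analysis is purely local. It cannot exclude a set far from any interval overtaking the interval at some larger $s$. It also does not handle maximizing sequences that split mass off to infinity, for which no optimal set need exist a priori. Within this paper, the statement rests entirely on the citation.
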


We then combine it with the following lemma, which is an extension of a theorem of Hardy, Littlewood and Pólya that can be found in \cite[Theorem 2.1]{Chong_1974}.
\begin{lemma}[Hardy--Littlewood--Pólya majorization principle]\label{HLP}
    Assume that $f,g: [0, \infty) \xrightarrow{} \mathbb{R}^{+}$ are non-increasing functions with 
    \begin{equation}
    \label{eq:majorization}
    \int_0^t f(x) dx \leq \int_0^t g(x)dx
    \end{equation}
    for all $0<t< \infty$.
    Then 
    $$\int_0^{\infty} \Phi(f(x))dx \leq \int_0^{\infty} \Phi(g(x))dx$$
    holds for every increasing convex function $\Phi: \mathbb{R}^{+} \xrightarrow{} \mathbb{R}^{+}$ such that $\Phi(0) = 0$.
\end{lemma}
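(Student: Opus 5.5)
The plan is to reduce Lemma \ref{HLP} to a one-parameter family of elementary convex functions and verify the inequality for each of them directly, using that $f$ is non-increasing.

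\textbf{Step 1 (representation of $\Phi$).} Since $\Phi$ is convex on $[0,\infty)$, its right derivative $\varphi=\Phi'_+$ is non-decreasing and right-continuous. Because $\Phi$ is increasing, $\varphi\ge 0$. Let $\mu$ be the Stieltjes measure $d\varphi$ on $(0,\infty)$. Since $\Phi(0)=0$, we have for all $y\ge 0$
\begin{equation*}
\Phi(y)=\int_0^y\varphi(s)\,ds=\varphi(0)\,y+\int_{(0,\infty)}(y-a)_+\,d\mu(a).
\end{equation*}
All terms are non-negative, so Tonelli's theorem gives
\begin{equation*}
\int_0^\infty\Phi(f)\,dx=\varphi(0)\int_0^\infty f\,dx+\int_{(0,\infty)}\Bigl(\int_0^\infty (f(x)-a)_+\,dx\Bigr)d\mu(a),
\end{equation*}
and the same identity holds with $g$ in place of $f$. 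These quantities may be $+\infty$, which is harmless. It therefore suffices to prove two things: $\int_0^\infty f\le\int_0^\infty g$, and $\int_0^\infty(f-a)_+\le\int_0^\infty(g-a)_+$ for every $a>0$.

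\textbf{Step 2 (the linear part).} Letting $t\to\infty$ in \eqref{eq:majorization} and using monotone convergence on both sides gives $\int_0^\infty f\le\int_0^\infty g$.

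\textbf{Step 3 (the truncated parts).} Fix $a>0$ and set $t_a=\sup\{x: f(x)>a\}\in[0,\infty]$. Since $f$ is non-increasing, $f>a$ on $[0,t_a)$ and $f\le a$ on $(t_a,\infty)$. If $t_a<\infty$, then
\begin{equation*}
\int_0^\infty (f-a)_+=\int_0^{t_a}(f-a)\le\int_0^{t_a}(g-a)\le\int_0^{t_a}(g-a)_+\le\int_0^\infty(g-a)_+ ,
\end{equation*}
where the first inequality is \eqref{eq:majorization} at $t=t_a$ after subtracting $a t_a$ from both sides. If $t_a=\infty$, the same chain applied on $[0,t]$ gives $\int_0^t(f-a)\le\int_0^\infty(g-a)_+$ for every $t$, and letting $t\to\infty$ by monotone convergence yields the claim. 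Notably, only the monotonicity of $f$ is used here; that of $g$ is not needed.

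\textbf{Step 4 (conclusion and main obstacle).} Combining Steps 2 and 3 with the representation from Step 1, and using $\varphi(0)\ge 0$ and $\mu\ge 0$, gives the lemma. The main point needing care is Step 1: the integral representation of a general convex $\Phi$, which may be non-differentiable, and the Tonelli interchange when the integrals are infinite. Both are handled by working with the right derivative and non-negative integrands.
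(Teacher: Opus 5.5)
Your proof is correct. It differs from the paper mainly because the paper gives no argument at all: it imports the lemma from \cite[Theorem 2.1]{Chong_1974}. You instead prove it directly by the classical route. You write $\Phi(y)=\varphi(0)\,y+\int_{(0,\infty)}(y-a)_+\,d\mu(a)$ and then check the inequality separately for the linear term and for each angle function $(y-a)_+$, using the level $t_a$ of $f$.

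The details hold up. The one implicit point is that $\Phi$ must be continuous at $0$, which you need for $\Phi(y)=\int_0^y\varphi$; this follows from $0\le\Phi(x)\le\tfrac{x}{y}\Phi(y)$ for $0<x<y$. Since $f$ and $g$ are finite-valued and non-increasing, they are bounded, so subtracting $at_a$ is legitimate. Tonelli handles possibly infinite integrals under the convention $0\cdot\infty=0$.

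Your version buys more than self-containedness. Step 3 shows that the monotonicity of $g$ is never used. In the proof of Theorem \ref{mainth}, this means the lemma could be applied directly with $g=\lambda'\ge 0$. The detour through $(\lambda')^{*}$, which the authors introduce precisely so they need not prove $\lambda'$ is non-increasing, then becomes unnecessary, though it is harmless. The citation, in exchange, saves space and places the statement within Chong's more general majorization framework, at the cost of leaving the reader to match those hypotheses to the present setting.
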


\begin{proof}[Proof of Theorem \ref{mainth}]
We use first Theorem \ref{OptimalPWConcentration} and introduce the decreasing rearrangement of $\lambda'$ to obtain that 
\begin{align*}
     \int_{0}^{s}u^{*}(x) dx \leq \lambda(s) =  \int_{0}^s \lambda'(x)dx \leq \int_{0}^s (\lambda')^{*}(x)dx.
\end{align*}
holds for all $s > 0$. 
Note that we introduce the decreasing rearrangement of $\lambda'$ here to be able to use Lemma \ref{HLP} without proving that $\lambda'$ itself is non-increasing. 
We can thus apply Lemma \ref{HLP} with $\Phi(x) = x^{q/2}$, $q\geq 2$, to obtain
\begin{align*}
     \int_{0}^{\infty}(u^{*}(x))^{q/2} dx \leq \int_{0}^{\infty} ((\lambda')^{*}(x))^{q/2}dx.
\end{align*}
Using now the properties of the decreasing rearrangement we conclude that 
\begin{align*}
    \int_{\mathbb{R}}|f(x)|^q dx &= \int_0^{\infty} (u^{*}(x))^{q/2}dx \leq \int_{0}^{\infty} ((\lambda')^{*}(x))^{q/2}dx = \int_{0}^{\infty} (\lambda'(x))^{q/2}dx.
    \end{align*}
\end{proof}

\textbf{Remark.}
We want to highlight that although this method gives an asymptotically optimal bound as $q\to\infty$, we don't expect it to be optimal for any fixed finite $q$. For example, from \cite{Garsia} we know that $M_4 = 0.68698...$, but the bound we get with our method is $M_4\leq \int_0^{\infty} (\lambda'(x))^{2}dx \approx 0.6969$.

\section{Results about the function \texorpdfstring{$\lambda$}{lambda}}
In this section, we prove the results previously mentioned about the function $\lambda$.
It is well known that $\lambda$ is a real-analytic function \footnote{This fact can be justified as follows. Let \(S_{00}(c,x)\) denote the first angular prolate spheroidal function as in \cite{SlepianPollakI}. The results of \cite[\S\S 3.21--3.23, pp.~230--238]{MeixnerSchaefke1954} give its local joint analyticity in \((c,x)\) near \(\mathbb{R}\times\mathbb{C}\). Equation~(25) of \cite{SlepianPollakI}, evaluated at zero, then transfers this regularity to \(R_{00}^{(1)}(c,1)\) by integration over \([-1,1]\), and Equation~(27) of \cite{SlepianPollakI} yields a real-analytic extension of \(\lambda\) through \(c=0\).} such that
$$\lambda(s) = \int_{-s/2}^{s/2}|\psi_s(x)|^2 dx,$$
where $\psi_s$ is an appropriately scaled prolate spheroidal wave function.
Another useful fact is that if $R_{0,0}^{(1)}$ is a suitable radial prolate spheroidal function, then 
$$\lambda(s) = s \left(R_{0,0}^{(1)}\left(\frac{\pi s}{2},1\right)\right)^2.$$
See e.g. \cite{LandauPollakII,LandauPollakIII,SlepianPollakI} for the seminal work on the eigenfunctions and eigenvalues of the time-frequency concentration operator. 
\begin{proof}[Proof of Lemma \ref{smalls}]
    We will use that 
    $$\lambda(s) = s \left(R_{0,0}^{(1)}\left(\frac{\pi s}{2},1\right)\right)^2,$$
    where $R_{0,0}^{(1)}$ is a radial prolate spheroidal function which can be expanded using \cite[eq. (15.3.5)]{Zhang} as
    \begin{align*}
        R_{0,0}^{(1)}\left(\frac{\pi s}{2},1\right) = \left(\sum_{k=0}^{\infty}d_{2k}\left(\frac{\pi s}{2}\right)\right)^{-1} \sum_{k=0}^{\infty} (-1)^k d_{2k}\left(\frac{\pi s}{2}\right)j_{2k}\left(\frac{\pi s}{2}\right).
    \end{align*}
    Here $j_{2k}$ denotes the spherical Bessel function and $d_{2k}$ can be computed using \cite[eq. (15.4.7)]{Zhang}.
    As it is well known (see e.g. \cite{SphericalBessel}), 
    $$j_{n}(z) = z^n \sum_{k\geq 0}\frac{(-1)^k}{k! (2k+2n+1)!!}\left(\frac{z^2}{2}\right)^k,$$
    which means that 
    $$j_{2k}\left(\frac{\pi s}{2}\right) = \frac{1}{(4k+1)!!}\left(\frac{\pi s}{2}\right)^{2k} + O\left(s^{2k+2}\right).$$
    One can also compute that 
    \begin{align*}
        d_0(\pi s/2) &= 1 - \frac{1}{18}(\pi s/2)^2 + O(s^4), \\
        d_2(\pi s/2) &= -\frac{1}{9}(\pi s/2)^2 + O(s^4), \\
        d_{2k}(\pi s/2) &= O(s^{2k}),
    \end{align*}
    which altogether give
    \begin{align*}
        \lambda(s) &= s\left(\frac{(1 - \frac{\pi^2 s^2}{72} + O(s^4))(1-\frac{\pi^2 s^2}{24}+O(s^4)) + O(s^4)}{1-\frac{\pi^2 s^2}{24} + O(s^4)}\right)^2 \\
        &= s\left(1 - \frac{\pi^2 s^2}{72} + O(s^4)\right)^2 \\
        &= s- \frac{\pi^2}{36}s^3 + O(s^5).
    \end{align*}

    Since \(\lambda\) is real-analytic in a neighborhood of \(0\), the expansion of \(\lambda'\) follows immediately by termwise differentiation of the expansion above. 
\end{proof}

\begin{proof}[Proof of Lemma \ref{lambda(s)/s}]
Fix $0<a<b$ and let $\psi_b$ be such that $\lambda(b) = \int_{-b/2}^{b/2}|\psi_b(x)|^2dx$. Choose a measurable set \(E\subset (-\tfrac{b}{2}, \tfrac{b}{2})\), with \(|E|=a\), such that \(|\psi_b(x)|^2\geq |\psi_b(y)|^2\) for almost every \(x\in E\) and \(y\in (-\tfrac{b}{2}, \tfrac{b}{2})\setminus E\). Then clearly 
\begin{align*}
    \frac{1}{a}\int_E |\psi_b(x)|^2dx \geq \frac{1}{b}\int_{-b/2}^{b/2} |\psi_b(x)|^2dx.
\end{align*}
Therefore we get that 
\begin{align*}
    \lambda(a) \geq \int_E |\psi_b(x)|^2dx \geq \frac{a}{b}\int_{-b/2}^{b/2} |\psi_b(x)|^2dx = \frac{a}{b}\lambda(b),
\end{align*}
from which it follows that $\frac{1}{s}\lambda(s)$ is non-increasing for $s> 0$.
This leads us to
\begin{align*}
    \left(\frac{\lambda(s)}{s}\right)' = \frac{\lambda'(s)s-\lambda(s)}{s^2} \leq 0,
\end{align*}
or equivalently 
\begin{align*}
   \lambda'(s) \leq \frac{\lambda(s)}{s}
\end{align*}
for all $s\geq 0$.
\end{proof}

\begin{proof}[Proof of Corollary \ref{upperbound}]
    By Lemma~\ref{smalls} there exist $C\geq 1$ and $\delta\in(0,1]$ such that, for all $0<x\leq\delta$,
\begin{equation}\label{eq:deltaC}
\lambda'(x)\leq 1-\frac{\pi^{2}}{12}x^{2}+Cx^{4},
\qquad
\frac{\lambda(x)}{x}\leq 1-\frac{\pi^{2}}{72}x^{2},
\qquad
C\delta^{2}\leq\frac{\pi^{2}}{24}.
\end{equation}
Since $1+t\leq e^{t}$, the first bound in \eqref{eq:deltaC} gives
$(\lambda'(x))^{q/2}\leq \exp\bigl(-\tfrac{\pi^{2}q}{24}x^{2}+\tfrac{Cq}{2}x^{4}\bigr)$
on $(0,\delta]$. Using $e^{v}\leq 1+ve^{v}$ for $v\geq 0$ together with
$\tfrac{Cq}{2}x^{4}\leq \tfrac{\pi^{2}q}{48}x^{2}$, which follows from the third bound
in \eqref{eq:deltaC}, we obtain
\begin{align}
\int_{0}^{\delta}\bigl(\lambda'(x)\bigr)^{q/2}dx
&\leq \int_{0}^{\infty} e^{-\frac{\pi^{2}q}{24}x^{2}}dx
	+\frac{Cq}{2}\int_{0}^{\infty} x^{4}\,e^{-\frac{\pi^{2}q}{48}x^{2}}dx \notag\\
&= \sqrt{\frac{6}{\pi q}}
	+\frac{Cq}{2}\cdot\frac{3\sqrt{\pi}}{8}\Bigl(\frac{\pi^{2}q}{48}\Bigr)^{-5/2}
= \sqrt{\frac{6}{\pi q}}+O\bigl(q^{-3/2}\bigr).
\label{eq:mainrange}
\end{align}
To bound the integral on $(\delta, \infty)$ we use Lemma \ref{lambda(s)/s} and the fact that $\int_{0}^{\infty} \lambda'(x)dx = 1$ to get that
\begin{align}\label{eq:awayrange}
    \int_{\delta}^{\infty}\left(\lambda'(x)\right)^{q/2}dx \leq \left(\frac{\lambda(\delta)}{\delta}\right)^{q/2-1} \int_{\delta}^{\infty} \lambda'(x)dx \leq \left(1-\frac{\pi^2}{72}\delta^2\right)^{q/2-1},
\end{align}
which decays exponentially in $q$.
Adding \eqref{eq:mainrange} and
\eqref{eq:awayrange} proves the claim.
\end{proof}

\thanks{\textbf{Acknowledgments} The main idea behind this note was developed during a research visit of FP and DZ to the Universitat de Barcelona. They thank the Departament de Matemàtiques i Informàtica for its warm hospitality. All three authors are grateful to Joaquim Ortega-Cerdà, who supervised the visit, for his guidance and for many valuable discussions. }
\bibliographystyle{abbrv}
\bibliography{papers}

\end{document}